
\documentclass[11pt,a4paper]{article}
\author{Steffen Lauritzen\\ University of Copenhagen}
\title{Total Variation Convergence\\ Preserves Conditional Independence}




\usepackage{amssymb,amsmath,amsthm}
\usepackage{natbib}


\newtheorem{theorem}{Theorem}
\newtheorem{corr}{Corollary}

\newcommand{\dd}{\mathrm{d}}

\newcommand{\cd}{\,|\,}
\newcommand{\cip}{{\perp\!\!\!\perp}}

\newcommand{\E}{{\mathbf E}}
\newcommand{\FF}{\mathbb{F}}
\renewcommand{\AA}{\mathbb{A}}
\newcommand{\BB}{\mathbb{B}}

\begin{document}

\maketitle




\begin{abstract}This note establishes that if a sequence $P_n, n=1,\ldots$ of probability measures  converges in total variation to the limiting probability measure  $P$, and $\sigma$-algebras $\mathbb{A}$ and $\mathbb{B}$ are conditionally independent given $\mathbb{H}$ with respect to $P_n$ for all $n$, then they are also conditionally independent with respect to the limiting measure $P$. As a corollary, this also extends to pointwise convergence of densities to a density.

\end{abstract}

%


\section{Introduction}
\citet[pp.\ 38--39]{lauritzen:96} shows by counterexample that conditional independence is not closed under weak convergence in general, whereas this is the case for discrete sample spaces. However, as we shall see below, strengthening the convergence to convergence in total variation implies that conditional independence is preserved. 
\section{Preliminaries}
We consider a general probability space $(\Omega,
\FF, P)$, where $\Omega$ is a sample space, $\FF$ a $\sigma$-algebra of subsets of $\Omega$ and $P$ a $\sigma$-additive probability measure on $\FF$. 
For $\sigma$-algebras $\mathbb{A}$ and $\mathbb{B}$ 
we let $\mathbb{A}\vee  \mathbb{B}$ denote the smallest $\sigma$--algebra that contains both $\mathbb{A}$ and $\mathbb{B}$ and note that this is generated by the intersections$$ \AA\vee \BB = \sigma
\{A\cap B\cd A\in\mathbb{A}, B\in\mathbb{B}\}.
$$

Then the conditional probability of a set $A\in \AA$ given a $\sigma$-algebra $\mathbb{H}$ is defined as the conditional expectation 
of the indicator function $\mathbf{1}_A$:
\begin{equation*}
P(A \; \cd \mathbb{H}) = \E( \mathbf{1}_A \cd \mathbb{H}) 
\end{equation*}
and we say that 
two $\sigma$-algebras, $\mathbb{A}, \mathbb{B}\subseteq \FF$ are
conditionally independent given a $\sigma$-algebra $\mathbb{H}\subseteq \FF$ if
$$
P (A\cap B\cd \mathbb{H})=P(A \cd \mathbb{H}) P(B\cd \mathbb{H}) \text{ a.s.\ } P\quad  \mbox{for all\ } A \in \mathbb{A}, B
\in \mathbb{B} \, .
$$
Symbolically, we will write $\mathbb{A} \cip \mathbb{B} 
\cd \mathbb{H}$
and recall \citep{dawid:80} 
that 
\begin{equation}\label{eq:cipcondition}\mathbb{A} \cip \mathbb{B} \cd \mathbb{H}\iff
P(A \cd \mathbb{B} \vee \mathbb{H})= P(A\cd \mathbb{H}) \text{ a.s.\ } P.\end{equation}

\section{The main result}
\begin{theorem}\label{thm:totvar_conv}Let $P_n,n=1,\ldots$ be a sequence of probability measures on $(\Omega,\mathbb{F})$  that converges in total variation to a probability measure $P$ and let $\mathbb{A}$, $\mathbb{B}$, and $\mathbb{H}$ be sub-$\sigma$-algebras of $\mathbb{F}$. If $\mathbb{A}\cip \mathbb{B}\cd \mathbb{H}$ with respect to $P_n$ for all $n$, then  $\mathbb{A}\cip \mathbb{B}\cd \mathbb{H}$ with respect to $P$.
\end{theorem}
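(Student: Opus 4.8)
The plan is to verify the equivalent condition~\eqref{eq:cipcondition} for $P$. Fix $A\in\AA$ and set $\phi:=\E_P(\mathbf{1}_A\cd\mathbb{H})$, the \emph{limiting} conditional probability, which is a fixed $[0,1]$-valued $\mathbb{H}$-measurable function. Since the sets $B\cap H$ with $B\in\BB$, $H\in\mathbb{H}$ form an intersection-stable generator of $\BB\vee\mathbb{H}$, the $\pi$--$\lambda$ theorem shows that~\eqref{eq:cipcondition} holds for $P$ once I prove
\[
P(A\cap B\cap H)=\int_{B\cap H}\phi\,\dd P\qquad\text{for all }B\in\BB,\ H\in\mathbb{H}.
\]
I would then exploit the hypothesis through condition~\eqref{eq:cipcondition} for each $P_n$, which with $\phi_n:=\E_{P_n}(\mathbf{1}_A\cd\mathbb{H})$ reads $P_n(A\cap B\cap H)=\int_{B\cap H}\phi_n\,\dd P_n$. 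Convergence in total variation gives $P_n(A\cap B\cap H)\to P(A\cap B\cap H)$, and, because $\phi$ is a \emph{fixed} bounded function, also $\int_{B\cap H}\phi\,\dd P_n\to\int_{B\cap H}\phi\,\dd P$. Hence the whole statement collapses to the single estimate $\int_{B\cap H}(\phi_n-\phi)\,\dd P_n\to 0$, for which it suffices that $\int|\phi_n-\phi|\,\dd P_n\to 0$.

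To establish this I would pass to a common dominating probability measure $\mu:=\tfrac{1}{2}P+\sum_{n\ge1}2^{-n-1}P_n$ and write $p_n:=\dd P_n/\dd\mu$, $p:=\dd P/\dd\mu$; convergence in total variation is exactly $p_n\to p$ in $L^1(\mu)$. Put $q_n:=\E_\mu(p_n\cd\mathbb{H})$, $q:=\E_\mu(p\cd\mathbb{H})$ and $r_n:=\E_\mu(\mathbf{1}_A p_n\cd\mathbb{H})$, $r:=\E_\mu(\mathbf{1}_A p\cd\mathbb{H})$; since $\mathbb{H}$-conditional expectation is an $L^1(\mu)$-contraction, $q_n\to q$ and $r_n\to r$ in $L^1(\mu)$. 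Testing the defining property of conditional expectation on sets $H\in\mathbb{H}$ yields the key algebraic identities $\phi_n q_n=r_n$ and $\phi q=r$ $\mu$-almost everywhere. Because $\phi_n-\phi$ is $\mathbb{H}$-measurable I may replace $\dd P_n$ by $q_n\,\dd\mu$ and compute
\[
\int|\phi_n-\phi|\,\dd P_n=\int|\phi_n-\phi|\,q_n\,\dd\mu=\int|r_n-\phi q_n|\,\dd\mu\le\int|r_n-r|\,\dd\mu+\int\phi\,|q-q_n|\,\dd\mu,
\]
using $r=\phi q$; both terms on the right tend to $0$ because $0\le\phi\le1$ and $r_n\to r$, $q_n\to q$ in $L^1(\mu)$.

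I expect the main obstacle to be conceptual rather than computational: conditional expectations are \emph{not} continuous under total variation convergence, since $\phi_n=r_n/q_n$ is a ratio whose denominator $q_n$ may be arbitrarily small, and $L^1$-convergence of the densities gives no control over such ratios. The device that circumvents this is to compare $\phi_n$ with the fixed limit $\phi$ and to integrate the difference against $P_n$: multiplying $\phi_n$ by its own $\mathbb{H}$-conditional density $q_n$ clears the denominator via $\phi_n q_n=r_n$, reducing the whole question to the $L^1(\mu)$-convergence that total variation supplies directly. Everything else is routine $\pi$--$\lambda$ bookkeeping.
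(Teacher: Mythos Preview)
Your proof is correct and takes a genuinely different route from the paper's. The paper avoids densities altogether: it sets $Z_n^A=P_n(A\cd\mathbb{H})$, extracts from this bounded sequence a weakly convergent subsequence in $L_2(P)$ whose limit $Z^A_\infty$ is $\mathbb{H}$-measurable (the $\mathbb{H}$-measurable functions form a closed, hence weakly closed, subspace), and then uses only the crude estimate $\bigl|\int_{B\cap H} Z_n^A\,\dd P-\int_{B\cap H} Z_n^A\,\dd P_n\bigr|\le\Vert P-P_n\Vert_\infty$ together with weak convergence to identify $Z^A_\infty$ as a version of $P(A\cd\BB\vee\mathbb{H})$. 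You instead fix the target $\phi=P(A\cd\mathbb{H})$ from the start and prove the quantitative bound $\int|\phi_n-\phi|\,\dd P_n\to 0$ by passing to a common dominating measure and exploiting the identity $\phi_n q_n=r_n$ to clear the problematic denominator. The paper's argument is shorter and sidesteps all Radon--Nikodym calculus, at the cost of invoking weak $L_2$-compactness; your argument is more elementary in that it uses only $L^1$-contractivity of conditional expectation, and it actually delivers a stronger conclusion---convergence of the conditional probabilities $\phi_n$ to $\phi$ in $L^1(P_n)$, not merely agreement of their integrals over test sets.
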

\begin{proof}Let $A\in \mathbb{A}$, $B\in \mathbb{B}$ and $H\in \mathbb{H}$ and let $$Z_n^A=P_n(A\cd \mathbb{H})= P_n(A\cd \mathbb{B}\vee \mathbb{H})$$ where we note that $Z_n^A$ is $\mathbb{H}$-measurable for all $n$. 
Since $Z_n^A, n=1,\ldots $ is bounded in $L_2(P)$, it has a subsequence $Z_{n_j}^A, j=1, \ldots$ that converges weakly in $L_2(P)$ to $Z^A_\infty$, i.e.\  in particular  
$$\lim_{j\to\infty} \int_{B\cap H} Z^A_{n_j}\,\dd P= \int_{B\cap H} Z^A_\infty \,\dd P\quad \text{for all $B\in\mathbb{B}$, $H\in \mathbb{H}$. }$$
  Further, $Z^A_\infty$ is $\mathbb{H}$ measurable. 
But since $|Z_n^A|\leq 1$ for all $n$ we have
$$  \left| \int_{B\cap H} Z^A_n \,\dd P- \int_{B\cap H} Z^A_n \,\dd P_n\right|\leq \Vert P-P_n\Vert_{\infty},$$
where $\Vert\cdot\Vert_{\infty}$ is the total variation norm
$\Vert P-Q\Vert_{\infty}=\sup_{F\in \mathbb{F}}|P(F)-Q(F)|.$ 
As $P_n$ converges in total variation to $P$  we therefore have that
\begin{eqnarray*}P(A\cap B\cap H)&=& \lim_{j\to \infty}P_{n_j}(A\cap B\cap H)=\lim_{j\to \infty} \int_{B\cap H} Z^A_{n_j} \,\dd P_{n_j}\\&=&
\lim_{j\to \infty}\int_{B\cap H} Z^A_{n_j} \,\dd P=\int_{B\cap H} Z^A_\infty \,\dd P\,.
\end{eqnarray*}
Since $\BB\vee\mathbb{H}$ is generated by the intersections $B\cap H$, we conclude that
$Z^A_\infty$ is a version of $P(A\cd \mathbb{B}\vee\mathbb{H})$ and thus  from (\ref{eq:cipcondition}) we conclude that $\mathbb{A}\cip \mathbb{B}\cd \mathbb{H}$ with respect to $P$, as desired. 
\end{proof}
In particular this gives the following corollary.
\begin{corr}Let $P_n,n=1,\ldots$ be a sequence of probability measures on $(\Omega,\mathbb{F})$  that have densities $f_n$ with respect to a $\sigma$-finite measure $\mu$. Assume  that the densities converge pointwise and almost everywhere with respect to $\mu$ to a probability density $f$. Let $P$ denote the probability measure with density $f$ with respect to $\mu$ and let $\mathbb{A}$, $\mathbb{B}$, and $\mathbb{H}$ be sub-$\sigma$-algebras of $\mathbb{F}$. If $\mathbb{A}\cip \mathbb{B}\cd \mathbb{H}$ with respect to $P_n$ for all $n$, then  $\mathbb{A}\cip \mathbb{B}\cd \mathbb{H}$ with respect to $P$.
\end{corr}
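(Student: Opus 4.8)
The plan is to deduce the corollary directly from Theorem~\ref{thm:totvar_conv} by showing that pointwise almost-everywhere convergence of the densities, together with the fact that the limit $f$ is itself a probability density, forces $P_n$ to converge to $P$ in total variation. Once total variation convergence is in hand, the conclusion is immediate from the theorem, since $\mathbb{A}\cip\mathbb{B}\cd\mathbb{H}$ holds with respect to every $P_n$ by hypothesis.

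First I would record the identity linking the total variation norm to the $L_1$ distance between densities. For two probability measures with $\mu$-densities $g$ and $h$, one has
$$\Vert P_g-P_h\Vert_{\infty}=\sup_{F\in\mathbb{F}}\left|\int_F (g-h)\,\dd\mu\right|=\int_{\{g>h\}}(g-h)\,\dd\mu=\frac{1}{2}\int |g-h|\,\dd\mu,$$
where the supremum is attained on $F=\{g>h\}$ and the final equality uses $\int(g-h)\,\dd\mu=0$. Thus convergence in total variation is \emph{equivalent} to convergence of the densities in $L_1(\mu)$, and it suffices to establish $\int|f_n-f|\,\dd\mu\to 0$.

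The heart of the matter, and the step I expect to be the main obstacle, is to pass from pointwise convergence $f_n\to f$ (\,$\mu$-a.e.) to $L_1$ convergence. Pointwise convergence alone is insufficient in general, as mass may escape to infinity; what rescues the argument is precisely that both $f_n$ and the limit $f$ are probability densities, so that the total mass $1$ is preserved. This is the content of Scheffé's lemma, whose short proof I would reproduce: writing $\int|f_n-f|\,\dd\mu=2\int (f-f_n)^+\,\dd\mu$ (again using $\int(f_n-f)\,\dd\mu=0$), I note that the integrand satisfies $0\le (f-f_n)^+\le f$ and tends to $0$ pointwise $\mu$-a.e., so the dominated convergence theorem, with the integrable dominating function $f$, yields $\int (f-f_n)^+\,\dd\mu\to 0$.

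Combining the two steps gives $\Vert P_n-P\Vert_{\infty}=\frac{1}{2}\int|f_n-f|\,\dd\mu\to 0$, so that $P_n\to P$ in total variation. Theorem~\ref{thm:totvar_conv} then applies verbatim and delivers $\mathbb{A}\cip\mathbb{B}\cd\mathbb{H}$ with respect to $P$, as desired.
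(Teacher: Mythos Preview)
Your argument is correct and follows exactly the paper's approach: reduce the corollary to Theorem~\ref{thm:totvar_conv} by establishing total variation convergence via Scheff\'e's theorem. The only difference is that you spell out the short proof of Scheff\'e's lemma (and the identity $\Vert P_n-P\Vert_{\infty}=\tfrac{1}{2}\int|f_n-f|\,\dd\mu$), whereas the paper simply cites it.
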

\begin{proof}This follows directly from Theorem~\ref{thm:totvar_conv} by Scheff{\'e}'s theorem \citep{scheffe:47}, see also \citet[Thm.\ 16.11]{billingsley:79}, establishing that almost everywhere convergence of densities to a density implies convergence in total variation. 
\end{proof}
Note also that Proposition 3.12 in \cite{lauritzen:96}  that weak convergence for discrete probability measures preserves conditional independence is a special instance of the corollary.
\section{Acknowledgements}
The main result was pointed out to the author by A. Klenke (personal communication) during lectures at the Summer School of Probability in Saint Flour, 2006.

\end{document}